\theoremstyle{plain}
\newtheorem{thm}{Theorem}
\newtheorem{lemma}[thm]{Lemma}
\newtheorem{conj}[thm]{Conjecture}
\theoremstyle{definition}
\theoremstyle{remark}
\newtheorem*{ex}{Example}
\numberwithin{equation}{section}
\numberwithin{thm}{section}
\DeclareMathOperator{\Li}{Li}
\newcommand{\CC}{\mathbb{C}}
\title{On Some Continued Fractions and Divergent Series Arising From Integral Families}
\author{Ishan Joshi}
\begin{document}

\begin{abstract}
    In this paper we present a method to derive Eulerian continued fractions arising from a sequence of integrals. As examples, through a new derivation, we reproduce classical continued fraction expansions for $\log(2)$, the Riemann zeta function $\zeta(s)$, and polylogarithms, while also obtaining several new identities. Finally, we apply the method to construct a divergent continued fraction, which provides a natural assignment of the Euler–Mascheroni constant $\gamma$ as the sum of a particular divergent series through a new summation method which we propose.
\end{abstract}

\maketitle

\section{Introduction}
The theory of continued fractions truly began with the Euclidean algorithm. However, it was not until 1579 when the theory began to take off with the work of Bombelli. The field continued with the works of mathematicians such as Gauss, Euler, and Ramanujan. Continued fractions are a very powerful tool. They often converge faster than series, and thus have powerful applications in approximation theory. They are also often used in irrationality proofs. In this paper, we present a method to generate a continued fraction from a given integral. As a result we have rederived several continued fractions, which can also be derived through Euler's Continued Fraction Theorem. The novelty of this work lies in our method and how it allows us to easily and naturally derive several new identities, such as:  $$1-\frac{49}{6\pi^2} = \dfrac{1}{5 - \dfrac{16}{13- \ddots }} \cdot \dfrac{4}{13- \dfrac{81}{25 - \ddots } }  \cdot \dfrac{9}{25 - \dfrac{256}{41 -\ddots}  }$$

After discussing the general method, we discuss some examples which lead to well known Euler Continued fractions. These include, the natural logarithm, the zeta function, and polylogarithms. Finally we will discuss the most striking application of our work, where our method produces a fraction which diverges to $-1$. This allows us to give meaning to the divergent sum: 

$$\gamma = \frac{7}{12} + \frac{1}{2}\sum_{k=1}^\infty\frac{B_{2k+2}}{k+1}$$

This highlights the beginnings of a new summation method to evaluate divergent sums which can be expanded upon in later works.

\section{Preliminaries and Notation}
A generalized continued fraction is a fraction of the form:
$$\frac{a_0}{b_0 + \dfrac{a_1}{b_1 + \dfrac{a_2}{b_2+ \dfrac{a_3}{b_3 + \cdots } } } }$$

This can often be space consuming to write. A more compact notation for this fraction is: $$\frac{a_0}{b_0 +} \frac{a_1}{b_1+} \frac{a_2}{b_2 +}\frac{a_3}{b_3 +} \cdots = \textbf{K}_{n=0}^\infty \frac{a_n}{b_n}$$

A continued fraction is considered simple if for all $i$, $a_i = 1$. However, throughout this paper, we will refer to generalized continued fractions as "continued fraction" where $a_i,b_i$ are complex numbers. 

\begin{thm}[Euler Continued Fraction Theorem]
    Let $\{a_i\}_{i=1}^\infty$ be a sequence of complex numbers. Then, $$1+\sum_{j=1}^\infty \prod_{i=1}^j a_i ={\displaystyle {\cfrac {1}{1-{\cfrac {a_{1}}{1+a_{1}-{\cfrac {a_{2}}{1+a_{2}-{\cfrac {a_{3}}{1+a_{3}-\ddots }}}}}}}}\,}. $$
\end{thm}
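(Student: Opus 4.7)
The plan is to prove this by induction on the truncation index $n$, showing that the $n$-th convergent of the continued fraction equals the $n$-th partial sum
$$S_n = 1 + \sum_{j=1}^{n} \prod_{i=1}^{j} a_i = 1 + a_1 + a_1 a_2 + \cdots + a_1 a_2 \cdots a_n,$$
and then passing to the limit. Since the finite-order convergents agree with the partial sums term-by-term, convergence of either side is equivalent to convergence of the other, so the infinite statement reduces to an identity between finite algebraic expressions.

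The key manipulation is to separate out the outer "$1/(1-\cdot)$" and work with the auxiliary fraction
$$H_n(a_1,\ldots,a_n) := \cfrac{a_1}{1+a_1-\cfrac{a_2}{1+a_2-\cfrac{a_3}{\ddots 1+a_{n-1}-\cfrac{a_n}{1+a_n}}}},$$
so that the $n$-th convergent of the claimed continued fraction is $1/(1-H_n(a_1,\ldots,a_n))$. The central observation is that $H_n$ satisfies the obvious shift recursion
$$H_n(a_1,\ldots,a_n) = \frac{a_1}{1 + a_1 - H_{n-1}(a_2,\ldots,a_n)},$$
while the partial sums satisfy $S_n(a_1,\ldots,a_n) = 1 + a_1 S_{n-1}(a_2,\ldots,a_n)$. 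I would then prove by induction on $n$ that
$$H_n(a_1,\ldots,a_n) = \frac{S_n(a_1,\ldots,a_n) - 1}{S_n(a_1,\ldots,a_n)},$$
since this immediately gives $1/(1-H_n) = S_n$.

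For the inductive step, writing $T = S_{n-1}(a_2,\ldots,a_n)$ and substituting the hypothesis $H_{n-1}(a_2,\ldots,a_n) = (T-1)/T$ into the recursion, a short computation gives
$$H_n = \frac{a_1}{1 + a_1 - \frac{T-1}{T}} = \frac{a_1 T}{1 + a_1 T},$$
which is exactly $(S_n - 1)/S_n$ because $S_n = 1 + a_1 T$. The base case $n=1$ is a direct check: $H_1 = a_1/(1+a_1) = (S_1 - 1)/S_1$.

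Finally, since the $n$-th convergent of the infinite continued fraction equals $S_n$ for every $n$, taking $n \to \infty$ yields the claim verbatim, with convergence of the continued fraction defined (in the usual way) as the limit of its convergents. I do not expect any serious obstacle: the only subtle point is bookkeeping in the recursion for $H_n$, and in particular making sure the recursion correctly peels off $a_1$ from the outside while leaving a valid sub-fraction of the same shape in $a_2,\ldots,a_n$.
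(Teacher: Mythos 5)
Your proof is correct, and it is essentially the standard induction argument that the paper itself alludes to ("this theorem can be easily proved using induction") but defers to the cited reference rather than writing out: the identification of the $n$-th convergent with the $n$-th partial sum via the auxiliary quantity $H_n = (S_n-1)/S_n$ is exactly the right bookkeeping, and the base case and inductive step both check out. The only caveat worth a sentence in a final write-up is that the identity holds whenever the relevant denominators (equivalently, the partial sums $S_n$) are nonzero, and the infinite statement is then the assertion that both sides converge or diverge together, with equality when they converge.
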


This theorem can be easily proved using induction. A proof can be found in \cite{euler2012introduction}.

\section{Continued Fractions Representing Certain Integrals}
Many functions and constants have natural representations as integrals. These integrals can then be used to create recursions. Often, continued fractions arise from a particular recursion. We will now proceed to show how families of integrals can be used to generate recurrences to create continued fractions. 

Let $f(x)$ be a complex valued integrable function such that $$\int_a^b f(x) dx $$ converges absolutely. Then, let $g(x,n)$ be some function such that there exists some integer $c$ such that $g(x,c) = f(x)$.
Then, let 
$$I_n = \int_a^b g(x,n) dx$$ If $k_1I_n + k_2I_{n+1} = \mu_n$ where $k_1,k_2 \in \CC$, we have that $$\frac{k_1I_{n} + k_2I_{n+1}}{k_1I_{n+1} + k_2I_{n+2}} = \frac{\mu_n}{\mu_{n+1}}$$
Then, if we define $r_n = \frac{I_{n+1}}{I_n}$, dividing the numerator and denominator by $I_{n+1}$ results in
$$\frac{\frac{k_1}{r_n} + k_2}{k_1 + k_2r_{n+1}} = \frac{\mu_n}{\mu_{n+1}}$$
Finally, separating the $r_n$ term, we get that: 
$$r_n = \frac{k_1\mu_{n+1}}{k_1\mu_n - k_2\mu_{n+1} + k_2\mu_n r_{n+1}}$$

This recurrence generates a continued fraction. However, it turns out that fractions generated in this way may not always converge to $r_n = \frac{I_{n+1}}{I_n}$. Examining the continued fraction we can see that $-\frac{k_1}{k_2}$ is a fixed point.

\begin{thm}
    If it exists, the continued fraction for $r_n$ converges if $$1+\sum_{k=1}^\infty \prod_{j=0}^{k-1}-\frac{k_2 \mu_{j+n+1}}{k_1\mu_{j+n}} = 1+\sum_{k=1}^\infty(-1)^k \bigg(\frac{k_2}{k_1}\bigg)^k\frac{\mu_{k+n}}{\mu_n}$$ converges.
\end{thm}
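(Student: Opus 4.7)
The plan is to massage the recurrence for $r_n$ into the canonical shape appearing on the right-hand side of the Euler Continued Fraction Theorem, and then transport convergence of the series directly to convergence of the continued fraction.

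To execute this, I would first divide the numerator and denominator in
$$r_n = \frac{k_1\mu_{n+1}}{k_1\mu_n - k_2\mu_{n+1} + k_2\mu_n r_{n+1}}$$
by $k_1\mu_n$ and introduce $\alpha_n := -\tfrac{k_2\mu_{n+1}}{k_1\mu_n}$. The substitution $w_n := -\tfrac{k_2}{k_1}r_n$ should then convert the recurrence into
$$w_n = \frac{\alpha_n}{1+\alpha_n - w_{n+1}},$$
which is exactly the recursive form of the tail of the continued fraction in Euler's theorem with $a_i = \alpha_{n+i-1}$. A short computation gives
$$\prod_{i=1}^{j}a_i = (-1)^j\Bigl(\frac{k_2}{k_1}\Bigr)^{j}\frac{\mu_{j+n}}{\mu_n},$$
so that the Euler series $1+\sum_{j\ge 1}\prod_{i=1}^{j}a_i$ matches the series in the statement term by term.

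With this dictionary in hand, I would invoke the Euler Continued Fraction Theorem, whose standard inductive proof (as referenced in the paper) identifies the $j$th partial sum of the series with the $j$th convergent of the continued fraction representing $\tfrac{1}{1-w_n}$. Convergence of the series therefore forces convergence of these convergents, and hence of $w_n$; the affine relation $r_n = -\tfrac{k_1}{k_2}w_n$ transfers convergence back to the original continued fraction for $r_n$.

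The main delicate point I anticipate is the degenerate case in which the series sums to $0$: by Euler's theorem this corresponds to $w_n = \infty$, i.e., $r_n$ approaching the fixed point $-k_1/k_2$ flagged immediately before the statement. Outside of this boundary case the reduction is clean, and no further estimates are needed because the rescaling and Möbius transformation between $r_n$ and $w_n$ preserve convergence of convergents. The novelty of the argument is thus entirely in locating the correct change of variables; once that is in place, Euler's theorem does the work.
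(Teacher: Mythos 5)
Your proposal is correct and follows essentially the same route as the paper: divide the recurrence by $k_1\mu_n$, recognize the resulting tail recursion as the canonical Euler form, and transport convergence of the series to the convergents of the continued fraction (your substitution $w_n=-\tfrac{k_2}{k_1}r_n$ just makes explicit the identification $\tfrac{1}{1+\frac{k_2}{k_1}r_n}=1+\sum_{k\ge 1}\prod_{j=0}^{k-1}\bigl(-\tfrac{k_2\mu_{j+n+1}}{k_1\mu_{j+n}}\bigr)$ that the paper writes down directly). One small slip in your closing remark: the fixed point $r_n=-k_1/k_2$ corresponds to $w_n=1$, i.e.\ to the Euler series diverging, whereas the series summing to $0$ corresponds to $w_n=\infty$ and hence $r_n=\infty$, not to the fixed point.
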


\begin{proof}
    Dividing by $k_1\mu_{n}$, we get $$r_n=\frac{\frac{\mu_{n+1}}{\mu_n}}{1 - \frac{k_2\mu_{n+1}}{k_1\mu_{n}} + \frac{k_2}{k_1}r_{n+1}}$$

    Then, by Euler's continued fraction theorem, $$\frac{1}{1+\frac{k_2}{k_1}r_n} = 1+\sum_{k=1}^\infty \prod_{j=0}^{k-1}-\frac{k_2 \mu_{j+n+1}}{k_1\mu_{j+n}}$$
\end{proof}

Because of this, from here on, we will say that the fraction diverges if it approaches the fixed point $-\frac{k_1}{k_2}$.

\section{The Natural Logarithm}

Now that we have introduced our method, we will use it in a number of examples. A constant which appears frequently in mathematics is $\log(2)$. To give a simple example of our method, we will find a continued fraction for $\log(2)$. Recall Euler's famous continued fraction for $\log(2)$:

$$\log(2) = \frac{1}{1 + \dfrac{1}{1 + \dfrac{2^2}{1+ \dfrac{3^2}{1 + \ddots } } } }$$

In this section, we will re-derive this result using our method, and then state some original and new identities which arise. 

\begin{thm}
    $$\log\bigg(1+\frac{1}{k}\bigg) = \frac{k}{k+ \textbf{K}_{n=1}^\infty \frac{k n^2 }{k(n+1) - n}}$$
\end{thm}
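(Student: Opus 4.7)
The plan is to apply the integral-to-continued-fraction procedure of Section 3 to the natural integral representation
\[ I_n := \int_0^1 \frac{x^n}{k+x}\,dx, \qquad I_0 = \log\!\bigl(1 + \tfrac{1}{k}\bigr). \]
Integrating the elementary identity $\frac{x^n + k x^{n-1}}{k+x} = x^{n-1}$ over $[0,1]$ yields the two-term relation $k I_n + I_{n+1} = \frac{1}{n+1}$, which matches the framework of Section 3 with $k_1 = k$, $k_2 = 1$, and $\mu_n = \frac{1}{n+1}$.

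Plugging these data into the master recurrence for $r_n := I_{n+1}/I_n$ from Section 3 and clearing the $(n+1)(n+2)$ denominators produces
\[ r_n \;=\; \frac{k(n+1)}{k(n+2) - (n+1) + (n+2)\,r_{n+1}}. \]
The target continued fraction has \emph{quadratic} partial numerators $kn^2$, whereas this recurrence is linear in $n$. The bridge is the rescaling $t_n := n\,r_{n-1}$ for $n \ge 1$; a direct substitution shows
\[ t_n \;=\; \frac{k n^2}{k(n+1) - n + t_{n+1}}, \qquad t_1 = r_0, \]
so unrolling gives $r_0 = \mathbf{K}_{n=1}^\infty \frac{kn^2}{k(n+1)-n}$. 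The $n=0$ instance of the two-term relation, $kI_0 + I_1 = 1$, together with $I_1 = r_0 I_0$, then forces $I_0 = 1/(k+r_0)$. Assembling the two pieces produces the stated identity.

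To certify that the continued fraction represents $r_0$ rather than the spurious fixed point $-k$ flagged at the end of Section 3, I would invoke Theorem 3.1: its hypothesis reduces to absolute convergence of the series $1 + \sum_{j\ge 1} (-1)^j k^{-j}/(j+1)$, which is immediate for $k>1$ by the ratio test and for $k=1$ by the alternating series test. The only non-mechanical step in the whole argument is spotting the reparametrization $t_n = n\,r_{n-1}$ that promotes the linear data emerging from the master recurrence into the $kn^2$ numerators of the target; everything else --- integration, algebra, and convergence check --- is routine.
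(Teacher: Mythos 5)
Your argument is the paper's own proof made more careful: the same integral family, the same two-term relation $kI_n+I_{n+1}=\tfrac{1}{n+1}$, and the same recurrence for $r_n$, with your substitution $t_n=n\,r_{n-1}$ supplying exactly the equivalence transformation that turns the linear data into the $kn^2$ numerators (a step the paper leaves entirely implicit), and with Theorem 3.1 supplying a convergence check the paper omits. One caveat: what your assembly (and the paper's) actually yields is $\log\bigl(1+\tfrac{1}{k}\bigr)=\frac{1}{k+\mathbf{K}_{n=1}^{\infty}\frac{kn^2}{k(n+1)-n}}$, with outer numerator $1$ rather than $k$ --- the two agree only at $k=1$ (numerically at $k=2$ the printed form gives $\approx 0.81$ while $\log\tfrac32\approx 0.405$, and $\tfrac{1}{k+r_0}$ with $r_0=\tfrac{1}{\log(1+1/k)}-k$ gives the right value) --- so the statement as displayed appears to carry a typo that your closing sentence inherits rather than resolves.
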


\begin{proof}
    Let $$I_n = \int_0^1 \frac{x^n}{k+x} dx $$ This particular family of integrals is chosen because $I_0 = \log(1+\frac{1}{k})$, and it allows for the nice cancellation which will be demonstrated shortly. Following the method, $$kI_n + I_{n+1} = \int_0^1 \frac{kx^n + x^{n+1}}{k+x}dx = \int_0^1 \frac{x^n(k + x)}{k+x}dx = \int_0^1 x^n dx = \frac{1}{n+1}$$ Then, we have that $$\frac{kI_n + I_{n+1}}{kI_{n+1} + I_{n+2}} = \frac{n+2}{n+1}$$

    Setting $r_n = \frac{I_{n+1}}{I_n}$, and dividing the numerator and denominator by $I_{n+1}$ gives us $$\frac{\frac{k}{r_n} + 1}{k+r_{n+1}} = \frac{n+2}{n+1}$$ $$r_n = \frac{k(n+1)}{k(n+2) - (n+1)+(n+2)r_{n+1}}$$

    Then, $r_0 = \frac{I_1}{I_0} = \frac{1}{\log(1+\frac{1}{k})} - k$, and isolating $\log(1+1/k)$ gives our result.
  
\end{proof}

Now that we have this fraction, substituting $k=1$ gives us Euler's famous continued fraction. Now, we will discuss some identities which arise from this. Immediately we notice that when we multiply the $r_k$'s, we get the following identity. 

\begin{thm}
    $$\int_0^1 \frac{x^n}{k+x} = \log(1+1/k)\prod_{k=0}^nr_k$$
\end{thm}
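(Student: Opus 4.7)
The plan is to observe that the claimed identity is a direct telescoping of the ratios $r_j$ from the previous theorem, combined with the evaluation of the base integral $I_0$. I will first point out (and silently repair) what appears to be an index collision in the statement: the outer parameter is already called $k$, so the product index should be renamed, say $j$, and the upper bound should be $n-1$ so that the product equals $I_n/I_0$.

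First I would recall the setup from the previous theorem, namely
\[
I_n = \int_0^1 \frac{x^n}{k+x}\,dx, \qquad r_j = \frac{I_{j+1}}{I_j}.
\]
By construction $r_j$ is a well-defined nonzero real number for each $j\ge 0$, since the integrand is positive on $(0,1)$, so in particular no factor in the product vanishes.

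Next I would write out the finite product and telescope:
\[
\prod_{j=0}^{n-1} r_j \;=\; \prod_{j=0}^{n-1} \frac{I_{j+1}}{I_j} \;=\; \frac{I_n}{I_0}.
\]
Finally, evaluating the base case gives
\[
I_0 \;=\; \int_0^1 \frac{dx}{k+x} \;=\; \log(k+1) - \log(k) \;=\; \log\!\Bigl(1+\tfrac{1}{k}\Bigr),
\]
so that $I_n = \log\!\bigl(1+\tfrac{1}{k}\bigr)\,\prod_{j=0}^{n-1} r_j$, which is the claimed identity.

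There is no real obstacle here; the only subtlety is notational, namely making sure the reader does not confuse the parameter $k$ (the integrand's shift) with the product index. Since the previous theorem already defines $r_n$ as $I_{n+1}/I_n$ and shows how to unfold it into the continued fraction, this identity is essentially a restatement of that unfolding, now reinterpreted as a closed-form expression for $I_n$ in terms of $I_0$ and the convergents $r_0, r_1, \dots, r_{n-1}$.
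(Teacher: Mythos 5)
Your proof is correct and takes essentially the same route as the paper's: telescope $\prod r_j = I_n/I_0$ and evaluate $I_0 = \log(1+1/k)$. You also correctly identify and repair the index collision and the off-by-one in the statement (the paper's own proof telescopes to $I_{n+1}/I_0$ yet equates it with $\int_0^1 \frac{x^n}{k+x}\,dx \big/ \log(1+1/k)$, which is inconsistent as written), so your version with the product running to $n-1$ is the corrected form of the same argument.
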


\begin{proof}
    $$\prod_{k=0}^nr_k=\prod_{k=0}^n \frac{I_{k+1}}{I_k} = \frac{I_{n+1}}{I_0} = \frac{\int_0^1 \frac{x^n}{k+x}}{\log(1+ 1/k)}$$
\end{proof}

The product of the $r_k$'s telescopes, meaning we get an expression for the integral as a product of continued fractions.

\section{The Riemann Zeta Function}

The Riemann Zeta function is an important function in number theory, specifically in analytic number theory. The zeta function is defined as such: $$\zeta(s) = \sum_{n=1}^\infty \frac{1}{n^s} $$ The sum converges when $\Re(s) > 1$, so the function is defined to be its analytic continuation everywhere else. 

There are some other important variants of $\zeta(s)$. These are the truncated zeta function $\zeta_n(s) = \sum_{k=1}^n \frac{1}{k^s}$, and the Hurwitz zeta function $\zeta(s,n) = \sum_{k=0}^\infty  \frac{1}{(k+n)^s}$. We will discuss these later within this section. 
To find a continued fraction representation for $\zeta(s)$, we will first consider this well known integral:
\begin{lemma}
    $$\int_0^\infty \frac{x^{s-1}}{e^x - 1} dx = \zeta(s)\Gamma(s)$$
\end{lemma}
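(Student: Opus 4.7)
The plan is to reduce the integral to the Gamma integral by expanding $\frac{1}{e^x-1}$ as a geometric series in $e^{-x}$, interchanging sum and integral, and then summing the resulting Dirichlet series.

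First I would write, for $x>0$,
$$\frac{1}{e^x - 1} = \frac{e^{-x}}{1-e^{-x}} = \sum_{n=1}^\infty e^{-nx},$$
which converges since $0 < e^{-x} < 1$. Substituting into the integral gives
$$\int_0^\infty \frac{x^{s-1}}{e^x - 1}\,dx = \int_0^\infty x^{s-1} \sum_{n=1}^\infty e^{-nx}\,dx.$$

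Next I would interchange the sum and integral. Assuming $\Re(s) > 1$, the integrand is non-negative when $s$ is real, so Tonelli's theorem applies directly; for complex $s$ with $\Re(s) > 1$ one instead checks that $\sum_{n=1}^\infty \int_0^\infty |x^{s-1}| e^{-nx}\,dx = \Gamma(\Re(s))\zeta(\Re(s)) < \infty$, which justifies Fubini. After swapping, a change of variables $u = nx$ in each term yields
$$\int_0^\infty x^{s-1} e^{-nx}\,dx = \frac{1}{n^s}\int_0^\infty u^{s-1} e^{-u}\,du = \frac{\Gamma(s)}{n^s}.$$
Summing over $n$ and factoring out $\Gamma(s)$ produces the claimed identity.

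The main obstacle, and really the only delicate point, is justifying the interchange of sum and integral; everything else is bookkeeping. The singularity of $\frac{1}{e^x-1}$ at $x=0$ behaves like $1/x$, so the integrand $x^{s-1}/(e^x-1)$ is integrable near $0$ exactly when $\Re(s)>1$, which is precisely the condition under which the Dirichlet series defining $\zeta(s)$ converges absolutely; these two conditions line up and make the Fubini/Tonelli step clean.
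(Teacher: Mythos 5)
Your proof is correct and follows essentially the same route as the paper: geometric series expansion of $\frac{1}{e^x-1}$, interchange of sum and integral justified by Fubini/Tonelli for $\Re(s)>1$, and the substitution $u=nx$ to produce $\Gamma(s)/n^s$. Your justification of the interchange is in fact slightly more careful than the paper's.
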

\begin{proof}
    To begin, note that $$\frac{1}{e^x -1} = \frac{e^{-x}}{1-e^{-x}} = \sum_{k=1}^\infty e^{-kx}$$
    
    Then,
    $$\int_0^\infty \frac{x^{s-1}}{e^x-1} dx = \int_0^\infty x^{s-1} \sum_{k=1}^\infty e^{-kx}\ dx = \sum_{k=0}^\infty \int_0^\infty x^{s-1}e^{-kx}\  dx  $$
    $$= \sum_{k=0}^\infty \frac{1}{n^s} \int_0^\infty u^{s-1} e^{-u} \ du = \sum_{k=0}^\infty \frac{1}{n^s} \Gamma(s) = \zeta(s)\Gamma(s)$$
    Note that the switching of the sum and integral is allowed when $\Re(s) > 1$ by Fubini's theorem. 
\end{proof}

This classical integral representation for the zeta function gives us enough to find a continued fraction representation for $\zeta(s)$. 

\begin{thm}
    For all $s \in \CC$, where $\Re(s) > 1$, $$1-\frac{1}{\zeta(s)} = -\textbf{K}_{n=1}^\infty \frac{-n^{2s}}{n^s + (n+1)^s}$$
\end{thm}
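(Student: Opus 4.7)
The plan is to apply the construction of Section 3 to the family
\[
I_n = \int_0^\infty \frac{x^{s-1}\,e^{-nx}}{e^x-1}\,dx,
\]
which reduces to the integral of the preceding Lemma when $n=0$, giving $I_0 = \zeta(s)\Gamma(s)$. The same expansion $1/(e^x-1) = \sum_{j\ge 1} e^{-jx}$ used in that Lemma shows more generally that $I_n = \Gamma(s)\,\zeta(s, n+1)$, so $I_1 = \Gamma(s)(\zeta(s)-1)$ and hence
\[
r_0 := \frac{I_1}{I_0} = 1 - \frac{1}{\zeta(s)}.
\]
This is the quantity I want to identify with the continued fraction.

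The key computational step is to produce a simple two-term linear relation among the $I_n$. Using the identity $(1-e^{-x})/(e^x-1) = e^{-x}$, I compute
\[
I_n - I_{n+1} = \int_0^\infty x^{s-1} e^{-nx}\cdot \frac{1-e^{-x}}{e^x-1}\,dx = \int_0^\infty x^{s-1} e^{-(n+1)x}\,dx = \frac{\Gamma(s)}{(n+1)^s},
\]
so I take $k_1 = 1$, $k_2 = -1$, and $\mu_n = \Gamma(s)/(n+1)^s$. Substituting into the general recurrence from Section 3 and clearing the common factor $\Gamma(s)/[(n+1)^s(n+2)^s]$ yields
\[
r_n = \frac{(n+1)^s}{(n+1)^s + (n+2)^s - (n+2)^s\,r_{n+1}}.
\]

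Unfolding this at $n=0$, the factor $(n+2)^s$ sitting next to $r_{n+1}$ combines with the numerator $(n+2)^s$ of $r_{n+1}$ at the next step to produce $(n+2)^{2s}$, and the same pattern repeats at every level, yielding the promised numerators $k^{2s}$. Writing out the resulting cascade,
\[
r_0 = \cfrac{1}{1 + 2^s - \cfrac{2^{2s}}{2^s + 3^s - \cfrac{3^{2s}}{3^s + 4^s - \ddots}}},
\]
which, after absorbing the two minus signs, is exactly $-\textbf{K}_{n=1}^\infty \frac{-n^{2s}}{n^s+(n+1)^s}$.

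Finally, convergence must be verified via Theorem 3.1. With $k_1=1$ and $k_2=-1$, the factor $(-1)^k(k_2/k_1)^k$ equals $1$, so the test series at $n=0$ becomes
\[
1 + \sum_{k=1}^\infty \frac{\mu_k}{\mu_0} = 1 + \sum_{k=1}^\infty \frac{1}{(k+1)^s} = \zeta(s),
\]
which converges absolutely for $\Re(s)>1$. I expect the main obstacle to be bookkeeping rather than anything analytical: tracking that the powers of $(n+1)^s$ and $(n+2)^s$ combine correctly when unrolling the recurrence to give the $n^{2s}$ numerators, and that the outer sign conventions in $-\textbf{K}$ line up with the $k_2 = -1$ choice.
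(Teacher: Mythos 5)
Your proposal is correct and follows essentially the same route as the paper: the same family $I_n = \int_0^\infty x^{s-1}e^{-nx}/(e^x-1)\,dx$, the same two-term relation $I_n - I_{n+1} = \Gamma(s)/(n+1)^s$, and the same recurrence for $r_n = I_{n+1}/I_n$ unrolled into the continued fraction. The only difference is that you additionally verify convergence via Theorem 3.1 (obtaining the test series $\zeta(s)$), a check the paper's proof omits; this is a welcome strengthening rather than a divergence in method.
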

\begin{proof}
    Consider the integral $$I_n = \int_0^\infty \frac{e^{-nx} x^{s-1}}{e^x-1} dx$$
    We see from above that $I_0 = \zeta(s)\Gamma(s)$. Similarly to before, this family of integrals was chosen in order to obtain the required cancellation. Then, 

    $$I_{n-1} - I_n = \int_0^\infty \frac{e^{-nx + x }x^{s-1}}{e^x -1}dx - \int_0^\infty \frac{e^{-nx + x }x^{s-1}}{e^x -1}dx = \int_0^\infty \frac{e^{-nx}x^{s-1}(e^x -1)}{e^x -1}dx$$ $$=\int_0^\infty e^{-nx}x^{s-1} dx = \frac{1}{n^s}\int_0^\infty e^{-u} u^{s-1} du = \frac{1}{n^s} \Gamma(s)$$

    Then, proceeding with the next step in the method, we get $$\frac{I_n - I_{n+1}}{I_{n+1}-I_{n+2}} = \frac{\frac{1}{(n+1)^s} \Gamma(s)}{\frac{1}{(n+2)^s}\Gamma(s)} = \frac{(n+2)^s}{(n+1)^s}$$

    Letting $r_n = \frac{I_{n+1}}{I_n}$ and dividing the numerator and denominator by $I_{n+1}$, we find that $$\frac{\frac{1}{r_n} -1 }{1-r_{n+1}} = \frac{(n+2)^s}{(n+1)^s}$$ Thus, $$r_n = \frac{(n+1)^s}{(n+2)^s + (n+1)^s - (n+2)^s\  r_{n+1}}$$ This gives us the recurrence relation for our continued fraction. Recall that $I_0 = \zeta(s) \Gamma(s)$. By evaluating $I_1$ using the exact same method, we see that $I_1 = (\zeta(s) -1)\Gamma(s)$. So, $$r_0 = \frac{(\zeta(s) - 1)\Gamma(s)}{\zeta(s)\Gamma(s)}= 1-\frac{1}{\zeta(s)}$$ Writing the recurrence relation in continued fraction form gives us that $$r_0 = 1-\frac{1}{\zeta(s)} = -\textbf{K}_{n=1}^\infty \frac{-n^{2s}}{n^s + (n+1)^s}$$
\end{proof}

Therefore, we clearly see that $$\zeta(s) = \frac{1}{1+
\textbf{K}_{n=1}^\infty \frac{-n^{2s}}{n^s + (n+1)^s}}$$

This is precisely the fraction that results after applying the Euler Continued Fraction theorem on the series definition for $\zeta$. Our method is a new derivation of this fraction. Additionally, the way in which we find this fraction yields some more results.

\begin{thm}
    $$\frac{\zeta_{n+1}(s)}{\zeta(s)} = 1-\prod_{k=0}^n r_k$$
\end{thm}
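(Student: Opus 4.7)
The plan is to mimic the telescoping argument used in Theorem~4.2 (for the logarithm), replacing the integral family by the one already set up in the previous proof of the zeta continued fraction. Specifically, with $r_k = I_{k+1}/I_k$ where $I_n = \int_0^\infty \frac{e^{-nx} x^{s-1}}{e^x-1}\,dx$, the product collapses:
\[
\prod_{k=0}^n r_k \;=\; \prod_{k=0}^n \frac{I_{k+1}}{I_k} \;=\; \frac{I_{n+1}}{I_0}.
\]
Since $I_0 = \zeta(s)\Gamma(s)$ was already computed, the entire theorem reduces to evaluating $I_{n+1}$ in closed form.

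To compute $I_{n+1}$, I would repeat the geometric-series trick from the lemma that gave $\int_0^\infty x^{s-1}/(e^x-1)\,dx = \zeta(s)\Gamma(s)$. Writing $\tfrac{1}{e^x-1}=\sum_{k\ge 1} e^{-kx}$ and multiplying by $e^{-(n+1)x}$ yields
\[
\frac{e^{-(n+1)x}}{e^x-1} \;=\; \sum_{m=n+2}^\infty e^{-mx},
\]
so that, after swapping sum and integral (justified for $\Re(s)>1$ by Fubini/Tonelli exactly as in the lemma),
\[
I_{n+1} \;=\; \sum_{m=n+2}^\infty \int_0^\infty x^{s-1} e^{-mx}\,dx \;=\; \Gamma(s)\sum_{m=n+2}^\infty \frac{1}{m^s} \;=\; \Gamma(s)\bigl(\zeta(s)-\zeta_{n+1}(s)\bigr).
\]
This is consistent with the already-verified cases $I_0 = \Gamma(s)\zeta(s)$ and $I_1 = \Gamma(s)(\zeta(s)-1)$, which serves as a useful sanity check.

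Finally, I would simply plug back in:
\[
\prod_{k=0}^n r_k \;=\; \frac{I_{n+1}}{I_0} \;=\; \frac{\Gamma(s)\bigl(\zeta(s)-\zeta_{n+1}(s)\bigr)}{\Gamma(s)\zeta(s)} \;=\; 1 - \frac{\zeta_{n+1}(s)}{\zeta(s)},
\]
and rearrange. There is no real obstacle here; the only step that requires any care is the interchange of summation and integration, and that is already handled by the same hypothesis $\Re(s)>1$ invoked in the lemma. The proof is essentially a telescoping identity powered by the closed form for $I_{n+1}$.
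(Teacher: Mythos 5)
Your proposal is correct and follows essentially the same route as the paper: telescope $\prod_{k=0}^n r_k$ to $I_{n+1}/I_0$ and use the closed form $I_{n+1}=\Gamma(s)\bigl(\zeta(s)-\zeta_{n+1}(s)\bigr)$. The only difference is that you explicitly justify that closed form via the geometric-series expansion, a step the paper's proof takes for granted.
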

\begin{proof}
    $$\prod_{k=0}^n r_k = \prod_{k=0}^n\frac{I_{k+1}}{I_k} = \frac{I_{n+1}}{I_0} = \frac{\Gamma(s)(\zeta(s) - \zeta_{n+1}(s))}{\Gamma(s)\zeta(s)} = 1 - \frac{\zeta_{n+1}(s)}{\zeta(s)}$$
\end{proof}

Letting $s=n=2$ in this formula gives the evaluation mentioned at the beginning of the paper. Directly following from this, we have that $$\zeta_{n+1}(s) = \zeta(s) - \zeta(s)\prod_{k=0}^n r_k$$ Then, since $\zeta(s) - \zeta_{n+1}(s) = \zeta(s,n+2)$, we get that $$\zeta(s,n+2) = \zeta(s)\prod_{k=0}^n r_k$$

Next, we will discuss another identity which also easily arises.

\begin{thm}
    $$\zeta(s, n+1) = r_n^k\zeta(s,n+1) + \frac{1}{(n+1)^s}\sum_{j=1}^k r^j_k$$
\end{thm}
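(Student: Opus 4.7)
The plan is to start from two facts already in hand. From the integral computation in the proof of the zeta continued fraction theorem, we have $I_n = \Gamma(s)\zeta(s,n+1)$, so that
\[
r_n \;=\; \frac{I_{n+1}}{I_n} \;=\; \frac{\zeta(s,n+2)}{\zeta(s,n+1)}.
\]
From the defining series of the Hurwitz zeta function we have the telescoping identity $\zeta(s,n+1) - \zeta(s,n+2) = 1/(n+1)^s$. Combining these gives the one-step recursion
\[
\zeta(s,n+1) \;=\; \frac{1}{(n+1)^s} \;+\; r_n\,\zeta(s,n+1).
\]

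The claimed identity is then nothing more than this single-step recursion iterated $k$ times, which I would verify by induction on $k$. The base case is exactly the recursion above. For the inductive step, I would substitute the recursion into the $r_n^k\,\zeta(s,n+1)$ summand on the right-hand side of the induction hypothesis: the $1/(n+1)^s$ piece acquires a factor of $r_n^k$, extending the partial geometric sum in $r_n$ by one term, and the trailing zeta factor acquires one additional factor of $r_n$, turning $r_n^k$ into $r_n^{k+1}$. This is exactly the predicted form at level $k+1$.

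I do not expect any real obstacle, because the content of the statement is essentially the geometric partial-sum formula $\sum_{j=0}^{k-1} r_n^j = (1-r_n^k)/(1-r_n)$ combined with the closed form $\zeta(s,n+1) = 1/[(n+1)^s(1-r_n)]$ that is forced by the one-step recursion. The only delicate point is bookkeeping the indices on $r$ correctly; I would treat the summand as the geometric series in the single variable $r_n$ so that the partial sum aligns with the recursion and the identity collapses cleanly at each inductive step.
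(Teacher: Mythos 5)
Your proposal is correct and is essentially the paper's own argument: both extract the one-step recursion $\zeta(s,n+1) = \frac{1}{(n+1)^s} + r_n\,\zeta(s,n+1)$ from $r_n = I_{n+1}/I_n$ together with the telescoping identity $\zeta(s,n+1)-\zeta(s,n+2) = \frac{1}{(n+1)^s}$, and then iterate $k$ times (the paper writes $\zeta(s)-\zeta_n(s)$ where you write $\zeta(s,n+1)$, but these are the same quantity). One caveat: the iteration actually produces $\frac{1}{(n+1)^s}\sum_{j=0}^{k-1} r_n^j$ --- exactly the geometric partial sum you invoke in your closing remark --- whereas the theorem as printed has $\sum_{j=1}^{k} r_n^j$ (besides the typo $r_k$ for $r_n$), so the stated identity is off by one (test $k=1$), and your assertion that the base case ``is exactly'' the one-step recursion holds only for the corrected indexing, not for the statement as literally written.
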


\begin{proof}
    $$r_n = \frac{I_{n+1}}{I_n} = \frac{\zeta(s) - \zeta_{n+1}(s)}{\zeta(s) - \zeta_n(s)} $$
    Then $r_n(\zeta(s) - \zeta_n(s)) + \frac{1}{(n+1)^s} = \zeta(s) - \zeta_n(s)$ Repeating this $k$ times, we get $$\zeta(s) - \zeta_n(s) = r_n^k( \zeta(s) - \zeta_n(s)) + \frac{1}{(n+1)^s}\sum_{j=1}^k  r_n^j$$

    Then, $$\zeta(s, n+1) = r_n^k\zeta(s,n+1) + \frac{1}{(n+1)^s}\sum_{j=1}^k r^j_n$$

\end{proof}

Thus, we have expressed the Hurwitz zeta function as a a sum of itself scaled by a continued fraction, added to a sum of continued fractions.

\section{Polylogarithms}

The polylogarithm, $\Li_s $, is a special function which is defined to be $$\Li_s(z) = \sum_{n=1}^\infty \frac{z^n}{n^s}$$

The zeta function is a special case of the polylogarithm, where $z=1$. The polylogarithm can also be expressed as the integral $$\frac{1}{\Gamma(s)}\int_0^\infty \frac{x^{s-1}}{\frac{e^x}{z} - 1}dx$$

\begin{thm}
    $$\Li_s(z) = \frac{1}{\frac{1}{z} - \frac{1}{z + 2^s +\textbf{K}_{n=2}^\infty \frac{-zn^{2s}}{zn^s + (n+1)^s} } }$$
\end{thm}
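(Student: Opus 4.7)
The plan is to follow the template of Section 3 exactly as it was used in the zeta-function proof, but starting from the integral representation of $\Li_s(z)$ instead. Specifically, I would define
$$I_n = \int_0^\infty \frac{e^{-nx}\, x^{s-1}}{\frac{e^x}{z} - 1}\, dx,$$
so that by the stated integral representation $I_0 = \Gamma(s)\,\Li_s(z)$. Expanding $\frac{1}{e^x/z - 1} = \sum_{k\ge 1} z^k e^{-kx}$ and integrating term by term (justified for $|z| < 1$ and $\Re(s) > 1$, with the identity extended afterwards by analytic continuation in $z$) gives $I_n = \Gamma(s)\sum_{k\ge 1} \frac{z^k}{(n+k)^s}$, and in particular $I_1 = \Gamma(s)\bigl(\Li_s(z)/z - 1\bigr)$.

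Next I would produce the recurrence to which the method of Section 3 can be applied. The key telescoping identity is
$$\frac{1}{z} I_n - I_{n+1} = \int_0^\infty \frac{e^{-(n+1)x}\bigl(e^x/z - 1\bigr) x^{s-1}}{e^x/z - 1}\, dx = \int_0^\infty e^{-(n+1)x} x^{s-1}\, dx = \frac{\Gamma(s)}{(n+1)^s},$$
so the general method applies with $k_1 = 1/z$, $k_2 = -1$, and $\mu_n = \Gamma(s)/(n+1)^s$. Setting $r_n = I_{n+1}/I_n$ and plugging into the recurrence formula of Section 3, a short calculation (clearing the factors $(n+1)^s(n+2)^s$ and $z$) gives
$$r_n = \frac{(n+1)^s}{(n+2)^s + z(n+1)^s - z(n+2)^s\, r_{n+1}}.$$

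Finally I would unfold this recurrence starting at $n=0$ to produce the continued fraction. The substitution $r_0 = 1/(2^s + z - 2^s z\, r_1)$, then replacement of $z\cdot 2^s r_1$ by $z\cdot 4^s/(3^s + 2^s z - 3^s z\, r_2)$, and so on inductively, yields exactly
$$r_0 = \cfrac{1}{z + 2^s + \textbf{K}_{n=2}^\infty \dfrac{-z n^{2s}}{z n^s + (n+1)^s}}.$$
The proof is then closed by the identity $r_0 = I_1/I_0 = 1/z - 1/\Li_s(z)$, which after rearrangement gives the claimed formula $\Li_s(z) = 1/(1/z - r_0)$.

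The routine parts are the telescoping computation and the algebraic verification that the unfolded recurrence matches the stated Pringsheim-style form; the one genuinely delicate point is convergence. As the authors already note in Section 3, the continued fraction only computes the intended value of $r_n$ provided the associated Euler series $1 + \sum_{k\ge 1}(-k_2/k_1)^k \mu_{k+n}/\mu_n = 1 + \sum_{k\ge 1} z^k (n+1)^s/(n+k+1)^s$ converges; one would invoke this criterion (valid for $|z|<1$, and then extend the identity analytically) to rule out the fixed-point $-k_1/k_2 = 1/z$ as the limit. This is the step where I would expect the most care to be required, since everything else mirrors the zeta-function derivation almost verbatim.
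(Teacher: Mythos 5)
Your proposal is correct and follows essentially the same route as the paper: the same family $I_n = \int_0^\infty \frac{x^{s-1}e^{-nx}}{e^x/z-1}\,dx$, the same telescoping identity $\frac{1}{z}I_n - I_{n+1} = \Gamma(s)/(n+1)^s$, and the same recurrence $r_n = (n+1)^s/\bigl(z(n+1)^s + (n+2)^s - z(n+2)^s r_{n+1}\bigr)$. In fact you supply more detail than the paper does (the explicit values of $I_0$, $I_1$, and $r_0$, and the convergence caveat via the Euler-series criterion), where the paper simply writes ``Expanding $r_0$ as a continued fraction gives the desired result.''
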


The proof for this fact is almost identical to the proof for the continued fraction for the zeta function. 

\begin{proof}
   Let $I_n = \int_0^\infty \frac{x^{s-1 }e^{-nx} }{e^x/z - 1} dx$. Then, we have that $$\frac{1}{z}I_{n-1} - I_n = \int_0^\infty \frac{x^{s-1} e^{-nx} ( e^x/z - 1)}{e^x/z - 1}  = \int_0^\infty x^{s-1}e^{-nx}dx = \frac{1}{n^s}\Gamma(s)$$

$$\frac{I_n - zI_{n+1}}{I_{n+1} - zI_{n+2}} = \frac{(n+2)^s}{(n+1)^s}$$ $$\frac{\frac{1}{r_n} - z}{1-zr_{n+1}} = \frac{(n+2)^s}{(n+1)^s}$$ $$r_n = \frac{(n+1)^s}{z(n+1)^s + (n+2)^s - z(n+2)^sr_{n+1}}$$

Expanding $r_0$ as a continued fraction gives the desired result. 
\end{proof}

Then, we trivially get the identity $$\prod_{k=0}^n r_k = \prod_{k=0}^n \frac{I_{k+1}}{I_k} = \frac{I_{n+1}}{I_0} = \frac{z^{-n}(\Li_s(z) - \Li_{s, n+1}(z))}{\Li_s(z)}$$

Next, we also get that since $$r_n = \frac{\Li_s(z) - \Li_{s,n+1}(z) }{z(\Li_s(z) - \Li_{s,n}(z))}$$

$$zr_n(\Li_s(z) - \Li_{s,n}(z) ) + \frac{z^{n+1}}{(n+1)^s}$$ Repeating this gives us that: $$\Li_s(z) - \Li_{s,n}(z)  = z^kr_n^k (\Li_s(z) - \Li_{s,n}(z) ) + \frac{z^{n+1}}{(n+1)^s}\sum_{j=1}^k r_n^j$$
$$\Phi(z,s,n+1) = r_n^k  z^{k}\Phi(z,s,n+1) + \frac{z^{n}}{(n+1)^s}\sum_{j=1}^k r_n^j $$

where, $\Phi(z,s,n) = \sum_{k=0}^\infty \frac{z^k}{(k+n)^s}$.

\section{The Euler-Mascheroni Constant}

The Euler-Mascheroni constant is a constant which appears very often in analysis and number theory. The Euler-Mascheroni Constant (Henceforth referred to as $\gamma$) can be defined as such:

$$\gamma = \lim_{n \to \infty} \Bigg(\sum_{k=1}^n \frac{1}{k}  \ - \log(n)\Bigg) \approx 0.57721 $$

Although a continued fraction is known, it appears to have no pattern. In this section, we will derive a continued fraction which diverges, but with our method, can be assigned the value $\frac{1}{12(\gamma -\frac{1}{2})} - 1$. Unlike the other fractions, this fraction is novel and demonstrates how our method can also be used to evaluate divergent infinite sums. In order to derive the continued fraction, as before, we will start with an integral. 

\begin{lemma}
    $$\int_0^\infty \frac{2x}{(x^2+1)(e^{2\pi x} -1)}dx = \gamma -\frac{1}{2}$$
\end{lemma}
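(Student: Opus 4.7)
The plan is to reduce the integral to a classical representation of $\gamma$ via two series–integral swaps, then identify the leftover integral by a Gauss-type formula. First, geometric expansion $\frac{1}{e^{2\pi x}-1}=\sum_{n=1}^\infty e^{-2\pi nx}$ for $x>0$ together with Fubini–Tonelli (the integrand is positive) converts the integral into
\[
\int_0^\infty \frac{2x\,dx}{(x^2+1)(e^{2\pi x}-1)}=\sum_{n=1}^\infty \int_0^\infty \frac{2x\,e^{-2\pi nx}}{x^2+1}\,dx.
\]
For each summand I would use the representation $\frac{x}{x^2+1}=\int_0^\infty e^{-t}\sin(tx)\,dt$ and swap order of integration (absolute convergence is immediate from the $e^{-2\pi nx-t}$ factor); the inner Laplace transform $\int_0^\infty e^{-2\pi nx}\sin(tx)\,dx=\frac{t}{(2\pi n)^2+t^2}$ collapses each summand to $2\int_0^\infty \frac{te^{-t}}{(2\pi n)^2+t^2}\,dt$.

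Next, exchange the sum and $t$-integral (dominated by $te^{-t}\sum_n(2\pi n)^{-2}$) and invoke the cotangent partial-fraction identity $\pi\coth(\pi z)=\frac{1}{z}+2z\sum_{n=1}^\infty\frac{1}{z^2+n^2}$ at $z=t/(2\pi)$. This gives $\sum_{n=1}^\infty\frac{t}{(2\pi n)^2+t^2}=\frac{\coth(t/2)}{4}-\frac{1}{2t}$, so the whole integral becomes $2\int_0^\infty e^{-t}\bigl(\tfrac14\coth(t/2)-\tfrac{1}{2t}\bigr)\,dt$. Substituting $\coth(t/2)=1+\tfrac{2}{e^t-1}$ splits this cleanly as
\[
\frac12+\int_0^\infty\!\left(\frac{e^{-t}}{e^t-1}-\frac{e^{-t}}{t}\right)dt.
\]

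The final step is to recognize the remaining integral as $\gamma-1$. Using the algebraic identity $\frac{e^{-t}}{e^t-1}=\frac{1}{e^t-1}-e^{-t}$, it equals $\int_0^\infty\bigl(\frac{1}{e^t-1}-\frac{e^{-t}}{t}\bigr)dt-1$, and Gauss's integral representation $\psi(1)=\int_0^\infty\bigl(\frac{e^{-t}}{t}-\frac{1}{e^t-1}\bigr)dt=-\gamma$ finishes the evaluation. Adding the $\tfrac12$ gives $\gamma-\tfrac12$.

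The main obstacle is the evaluation of the Gauss integral for $\psi(1)$: everything else is bookkeeping, but this step is where $\gamma$ actually enters. It can be derived by differentiating Binet's first formula for $\log\Gamma$, or bootstrapped from the integral representation of $\zeta(s)\Gamma(s)$ in Lemma 5.1 by analyzing the Laurent expansion near $s=1$. A conceptually cleaner alternative that bypasses the cotangent manipulation entirely is to apply the Abel–Plana summation formula to $f(x)=\frac{1}{x+1}$: the anti-holomorphic boundary term is exactly $\int_0^\infty\frac{2y}{(y^2+1)(e^{2\pi y}-1)}\,dy$, the real part gives $\log(N+1)+\tfrac12+\tfrac{1}{2(N+1)}$, and the residual $N$-dependent integral is $O(N^{-1})$; letting $N\to\infty$ with $\gamma=\lim_N(H_{N+1}-\log(N+1))$ yields the lemma directly.
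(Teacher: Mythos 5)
Your proof is correct, but it is worth noting that the paper does not actually prove this lemma at all: it simply cites Whittaker and Watson, where the identity is a specialization of Hermite's (Binet-type) integral formula for $\psi(z)=\log z-\frac{1}{2z}-2\int_0^\infty\frac{t\,dt}{(t^2+z^2)(e^{2\pi t}-1)}$ at $z=1$. Your derivation is therefore a genuinely different, self-contained route. I checked the chain: the geometric expansion and Tonelli step is fine; the Laplace-transform collapse $\int_0^\infty e^{-2\pi nx}\sin(tx)\,dx=\frac{t}{(2\pi n)^2+t^2}$ is right; the cotangent partial-fraction identity does give $\sum_{n\ge 1}\frac{t}{(2\pi n)^2+t^2}=\frac{1}{4}\coth(t/2)-\frac{1}{2t}$; the split into $\frac12+\int_0^\infty\bigl(\frac{e^{-t}}{e^t-1}-\frac{e^{-t}}{t}\bigr)dt$ is legitimate because the bracketed combination is bounded near $t=0$ (it tends to $-\tfrac12$ there); and the reduction to Gauss's formula $\psi(1)=-\gamma$ yields $\gamma-1$ for that piece, hence $\gamma-\tfrac12$ in total. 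You correctly identify that all the content is concentrated in the Gauss integral for $\psi(1)$ --- everything before it is elementary bookkeeping --- and your suggestion to extract it from the Laurent expansion of $\zeta(s)\Gamma(s)$ near $s=1$ would tie the lemma to machinery already present in the paper (Lemma 5.1), which is arguably the most economical option in context. The Abel--Plana alternative is also sound and is, in effect, the ``reason'' the identity is true: the boundary integral of Abel--Plana applied to $f(x)=\frac{1}{x+1}$ is exactly the integral in question, and the limit reproduces the definition of $\gamma$ directly; if you write that version up carefully you should use the finite-interval form of Abel--Plana and control the tail term uniformly, which you have indicated with the $O(N^{-1})$ estimate. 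Either version would strengthen the paper, which currently leaves this step entirely to a reference.
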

A proof can be found in \cite{whittaker2020course}

\begin{lemma}
    $$\int_0^\infty \frac{2x^3}{(x^2+1)(e^{2\pi x} -1)}dx = \frac{7}{12} -\gamma$$
\end{lemma}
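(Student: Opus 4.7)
The plan is to reduce this integral to the previous lemma plus a single elementary evaluation. The key algebraic identity is the partial-fraction-style decomposition
$$\frac{2x^3}{x^2+1} = 2x - \frac{2x}{x^2+1},$$
which lets us split the given integral as
$$\int_0^\infty \frac{2x^3}{(x^2+1)(e^{2\pi x} -1)}dx = \int_0^\infty \frac{2x}{e^{2\pi x}-1}dx - \int_0^\infty \frac{2x}{(x^2+1)(e^{2\pi x}-1)}dx.$$

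The second piece is exactly $\gamma - \tfrac{1}{2}$ by the preceding lemma. For the first piece I would invoke the zeta-integral already established in this paper: substituting $u = 2\pi x$ in $\int_0^\infty \tfrac{2x}{e^{2\pi x}-1}dx$ and applying the lemma from the zeta section with $s=2$ gives
$$\int_0^\infty \frac{2x}{e^{2\pi x}-1}\,dx = \frac{2}{(2\pi)^2}\int_0^\infty \frac{u}{e^u-1}\,du = \frac{2\,\Gamma(2)\zeta(2)}{(2\pi)^2} = \frac{1}{12}.$$

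Combining the two pieces yields $\tfrac{1}{12} - (\gamma - \tfrac{1}{2}) = \tfrac{7}{12} - \gamma$, as desired. The only step requiring a moment of care is justifying that each of the two split integrals converges individually, so that the decomposition is legitimate; this is immediate because $\tfrac{1}{e^{2\pi x}-1}$ decays exponentially at infinity and behaves like $\tfrac{1}{2\pi x}$ near $x=0$, making both $\tfrac{2x}{e^{2\pi x}-1}$ and $\tfrac{2x}{(x^2+1)(e^{2\pi x}-1)}$ absolutely integrable on $(0,\infty)$. So there is no real obstacle here; the proof is essentially a one-line algebraic manipulation combined with the previous lemma and the classical $\zeta(2)$ evaluation.
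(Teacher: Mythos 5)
Your proof is correct and follows essentially the same route as the paper: the decomposition $\frac{x^3}{x^2+1} = x - \frac{x}{x^2+1}$, the previous lemma for the second piece, and the $\zeta(2)$ integral for the first. In fact your substitution is more carefully carried out — the paper's intermediate expression $\frac{1}{\pi^2}\zeta(2)$ has a factor-of-two slip (it should read $\frac{1}{2\pi^2}\zeta(2) = \frac{1}{12}$, which is the value you obtain and which gives the stated result).
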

\begin{proof}
    Note that $\frac{x^3}{x^2+1} = x - \frac{x}{x^2+1}$
    Then  $$\int_0^\infty \frac{2x^3}{(x^2+1)(e^{2\pi x} -1)}dx = \int_0^\infty \frac{2x}{e^{2\pi x} - 1} dx - \int_0^\infty \frac{2x}{(x^2+1)(e^{2\pi x} -1)}dx $$ $$ = \int_0^\infty \frac{2x}{e^{2\pi x} - 1} dx - \gamma+\frac{1}{2} = \frac{1}{\pi^2}\zeta(2) - \gamma + \frac{1}{2} = \frac{7}{12} - \gamma$$
\end{proof}

\begin{thm}
    The continued fraction for $$\frac{7/12 - \gamma}{\gamma - 1/2} = \frac{1}{12(\gamma -\frac{1}{2})}-1$$ is given by $r_0$ in the recursion $$r_n = \frac{-nB_{2n+2}}{nB_{2n+2} + (n+1)B_{2n} + (n+1)(B_{2n})r_{n+1}}$$
\end{thm}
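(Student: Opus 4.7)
The plan is to apply the method of Section 3 to the integral family
$$I_n = \int_0^\infty \frac{2\,x^{2n-1}}{(x^2+1)(e^{2\pi x}-1)}\,dx,$$
indexed so that the two preceding lemmas give $I_1 = \gamma - \frac{1}{2}$ and $I_2 = \frac{7}{12} - \gamma$. The target quantity $(7/12 - \gamma)/(\gamma - 1/2)$ is then exactly $I_2/I_1$, i.e.\ the first well-defined ratio produced by the family.

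The first step is to exploit the polynomial cancellation $\frac{x^{2n+1}+x^{2n-1}}{x^2+1} = x^{2n-1}$, in exact analogy with the proofs of the $\log(1+1/k)$ and $\zeta(s)$ continued fractions, to obtain the clean linear relation
$$I_n + I_{n+1} = \int_0^\infty \frac{2\,x^{2n-1}}{e^{2\pi x}-1}\,dx.$$
The right-hand side is classical: a Mellin change of variable $u = 2\pi x$ identical to the one in the zeta-function lemma rewrites it as $2\,\Gamma(2n)\,\zeta(2n)/(2\pi)^{2n}$, and Euler's closed form $\zeta(2n) = \frac{(-1)^{n-1}(2\pi)^{2n}B_{2n}}{2(2n)!}$ collapses this to
$$\mu_n := I_n + I_{n+1} = \frac{(-1)^{n-1}B_{2n}}{2n}.$$

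With $k_1 = k_2 = 1$ and this $\mu_n$, the general recursion from Section 3 reads $r_n = \mu_{n+1}/(\mu_n - \mu_{n+1} + \mu_n r_{n+1})$. Substituting the closed form and then multiplying numerator and denominator by $2n(n+1)(-1)^{n-1}$ to clear the alternating signs, the numerator collapses to $-nB_{2n+2}$ and the three terms of the denominator become $(n+1)B_{2n}$, $nB_{2n+2}$, and $(n+1)B_{2n}\,r_{n+1}$, which is exactly the stated recursion. The initial value of the continued fraction is then read off from $I_2/I_1$.

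The main obstacle is conceptual rather than computational. Because $|B_{2n}|$ grows super-exponentially in $n$, the auxiliary series of Theorem 3 in Section 3 fails to converge, and the continued fraction does not converge to $I_{n+1}/I_n$ in the classical sense; it instead drifts to the fixed point $-k_1/k_2 = -1$ identified in Section 3. The identification of the continued fraction with $(7/12 - \gamma)/(\gamma - 1/2)$ must therefore be interpreted in the formal/regularized sense advertised in the introduction, where the recursion itself, rather than a limit of convergents, is what encodes the value. This formal identification is precisely the payoff of the section and the basis for the proposed new summation method.
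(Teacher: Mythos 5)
Your proposal is correct and follows essentially the same route as the paper: the same integral family, the same cancellation $x^{2n-1}(1+x^2)/(x^2+1)$, the same Mellin evaluation of $\int_0^\infty x^{2n-1}/(e^{2\pi x}-1)\,dx$, and the same substitution of Euler's formula for $\zeta(2n)$ to reach the Bernoulli-number recursion (you merely apply Euler's formula to $\mu_n$ before forming the recursion rather than after, and your closing remarks on divergence to the fixed point $-1$ match the paper's discussion following the proof). The only discrepancy is notational: the paper's family starts at $n=1$, so the target ratio $(7/12-\gamma)/(\gamma-1/2)$ is $I_2/I_1=r_1$ as you and the paper's proof both compute, despite the theorem statement labeling it $r_0$.
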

\begin{proof}

    Let $$I_n = \int_0^\infty \frac{2x^{2n-1}}{(x^2+1)(e^{2\pi x} -1)} dx$$

    From the previous two lemmas, we have that $I_1 = \gamma -\frac{1}{2}$, and $I_2 = \frac{7}{12} - \gamma$.
    We will now consider the sum $I_n + I_{n+1}$. $$I_n+I_{n+1} = \int_0^\infty \frac{2x^{2n -1}}{(x^2+1)(e^{2\pi x }-1)}dx + \int_0^\infty \frac{2x^{2n+1}}{(x^2+1)(e^{2\pi x}-1)}dx $$$$= 2\int_0^\infty \frac{x^{2n-1}(1+x^2)}{(x^2+1)(e^{2\pi x} -1)}dx = 2\int_0^\infty \frac{x^{2n-1}}{e^{2\pi x}-1 }dx = 2\bigg[\frac{(2n-1)! \zeta(2n)}{(2\pi)^{2n}}\bigg] $$

    Then, we have that $$\frac{I_n+I_{n+1}}{I_{n+1}+I_{n+2}} = \frac{\frac{(2n-1)!\zeta(2n)}{(2\pi)^{2n}}}{\frac{(2n+1)! \zeta(2n+2)}{(2\pi)^{2n+2}}}  = \frac{4\pi^2 \zeta(2n)}{(2n)(2n+1)\zeta(2n+2)}$$

    Letting $r_n = \frac{I_{n+1}}{I_n}$ and dividing the numerator and denominator of the fraction by $I_{n+1}$, we get that $$\frac{\frac{1}{r_n} +1}{1+r_{n+1}} = \frac{4\pi^2 \zeta(2n)}{(2n)(2n+1)\zeta(2n+2)} $$ Therefore, $$r_n = \frac{(2n^2 + n) \zeta(2n+2)}{(-2n^2 - n)\zeta(2n+2) + 4\pi^2 \zeta(2n) + 4\pi^2 \zeta(2n)r_{n+1}}$$

    However, we know something about even values of the zeta function. Specifically, $$\zeta(2n) = (-1)^{n+1} \frac{B_{2n }(2\pi)^{2n}}{2(2n)!}$$

    Then, once we simplify the expression, we get that $$r_n = \frac{-nB_{2n+2}}{nB_{2n+2}+(n+1)B_{2n} + (n+1)(B_{2n})r_{n+1}}$$

    We also have that $$r_1 = \frac{I_{2}}{I_1} = \frac{\frac{7}{12}-\gamma}{\gamma - \frac{1}{2}} = \frac{1}{12(\gamma-\frac{1}{2})}-1$$

\end{proof}

Cleaning this up a little, we get that $$r_n = \frac{-1}{1+\frac{(n+1)B_{2n}}{n B_{2n+2}} + \frac{(n+1)B_{2n}}{n B_{2n+2}}r_{n+1}}$$ 

Then $$r_1 = \frac{-1}{b_1 - \frac{a_1}{b_2- \dots}}$$ $$b_n = {1+\frac{(n+1)B_{2n}}{n B_{2n+2}}}, \ \ \ \ \ \ a_n = \frac{(n+1)B_{2n}}{n B_{2n+2}} =b_n-1 $$

By examining this fraction, we can see that it does not converge to the $r_1$ but instead to $-1$. However, by now applying Euler's Continued Fraction theorem, we arrive at the divergent series evaluation: $$\gamma = \frac{7}{12} + \frac{1}{2}\sum_{k=1}^\infty \frac{B_{2k+2}}{k+1}$$

By noticing that $\zeta(-n) = -\frac{B_{n+1}}{n+1}$, we get that $$\gamma = \frac{7}{12} - \sum_{k=1}^\infty \zeta(-2k-1)$$

This is a new evaluation of this divergent series, and demonstrates the applicability of our method even in cases where the fraction itself diverges. 

\section{More on C-Summation}
This section will serve to elaborate on the summation method which emerged in the previous section, which is henceforth referenced as C-Summation (short for continued fraction summation). In this section, we discuss some properties of C-Summation, some problems which arise, and compare it to other summation methods. To first convince the reader of it's applicability, we will use it to reproduce a famous result by Euler, which assigns a value to the alternating sum of factorials. \begin{ex}
    Suppose $$I_n = \int_0^\infty\frac{x^ne^{-x}}{1+x} dx$$ Then $I_{n} + I_{n+1} = \int_0^\infty x^n e^{-x} dx = n! $. The corresponding sum would be $$\int_0^\infty \frac{e^{-x}}{1+x}dx= 1+\sum_{k=1}^\infty (-1)^k {k!} = 1 - 1+2-6 + 24 - 120 \cdots $$

    This is precisely the sum which Euler gave to this series. However, we can extend this. Letting $s\in \CC$, we can assign $$\frac{1}{\Gamma(s+1)}\int_0^\infty \frac{x^s e^{-x}}{1+x}dx = 1+\sum_{k=1}^\infty (-1)^k\frac{\Gamma(s+k+1)}{\Gamma(s)}$$ Therefore, we have the following divergent series:  $$\int_0^\infty \frac{x^s e^{-x}}{1+x}dx = \sum_{k=0}^\infty (-1)^k \ \Gamma(s+k+1) = \Gamma(s+1) - \Gamma(s+2) +\Gamma(s+3) \cdots $$
\end{ex}

This evaluation is exactly what Euler got, meaning in this case C-Summation aligns with Euler Summation. However, before we continue to examine C-Summation, we must address a major problem, which is uniqueness. To illustrate this we present an example.

\begin{ex}
    Suppose $$I_n = \int_0^1 \frac{x^n}{1+x} dx \ \ \ \ \  J_n = \int_0^1 \bigg(\frac{x^n}{1+x} + (-1)^n\bigg) dx$$ It can be easily seen that $I_n+I_{n+1} = J_n + J_{n+1} = \frac{1}{n+1}$. So both of these sequences "correspond" to the same continued fraction and the same sum. However, it is clear that $I_n$ is a better choice and will correctly correspond to the continued fraction. 
\end{ex}

In order to prevent this and chose the correct $I_n$, we must have a criterion. First we recall the equation $k_1I_n + k_2I_{n+1} = \mu_n$. If $I_n$ is a particular solution to the equation, then if $J_n$ also satisfies the equation, then if $H_n = J_n - I_n$, we have that $k_1H_n + k_2 H_{n+1} = 0$. Then $H_{n+1} = -\frac{k_1}{k_2}H_n$, meaning that for some constant $C$, $H_{n+1} = H_1\big(-\frac{k_1}{k_2}\big)^{n-1} $. Then, $J_n =I_n + (I_1-J_1)\big(-\frac{k_1}{k_2}\big)^{n-1} $. In general $J_n =I_n + (I_k-J_k)\big(-\frac{k_1}{k_2}\big)^{n-k} $. Therefore it is necessary that some condition be added which ensures that ensures the right $I_n$ is chosen. When the corresponding series converges, this is easy. We can simply require that $\big(\frac{k_2}{k_1}\big)^{M+1}I_{n+M+1} \to 0  $. However, this does not help us in determining which $I_n$ are correct to use in the divergent case.

\begin{conj}
   If $$\sum_{k=n}^\infty I_kz^k $$ is analytic at $-\frac{k_2}{k_1}$, then $I_n$ is suitable for C-Summation.
\end{conj}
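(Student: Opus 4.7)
The plan is to interpret \emph{suitable for C-summation} as meaning that the formal identity
\[
\frac{1}{1+(k_2/k_1)r_n}\;=\;\frac{1}{\mu_n}\sum_{k=0}^{\infty}\Bigl(-\tfrac{k_2}{k_1}\Bigr)^{\!k}\mu_{n+k},
\]
obtained by collecting the telescoping product in Euler's continued fraction theorem, admits a natural value via analytic continuation that coincides with the required ratio $r_n=I_{n+1}/I_n$. The first step is to rewrite the series on the right as a generating-function expression. Setting $G(z):=\sum_{k\ge 0}I_{n+k}z^{k}$ and using $\mu_{n+k}=k_1 I_{n+k}+k_2 I_{n+k+1}$, a single index shift produces the formal identity
\[
\sum_{k=0}^{\infty}\mu_{n+k}z^{k}\;=\;\Bigl(k_1+\tfrac{k_2}{z}\Bigr)G(z)-\tfrac{k_2}{z}I_n,
\]
which is an equality of holomorphic functions wherever $G$ converges.

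The second step is to evaluate both sides at $w:=-k_2/k_1$. The prefactor $k_1+k_2/z$ has a simple zero at $z=w$, while the term $-(k_2/z)I_n$ takes the value $k_1 I_n$ there. Hence, if $G$ extends analytically to a neighbourhood of $w$ --- which, since $w\neq 0$, is equivalent to the stated hypothesis that $z^{n}G(z)=\sum_{k\ge n}I_{k}z^{k}$ is analytic at $w$ --- the right-hand side is analytic at $w$ with value $k_1 I_n$, and this value is the natural C-summation assignment to the (possibly divergent) series $\sum_{k\ge 0}w^{k}\mu_{n+k}$. Substituting into the Euler identity yields $1/(1+(k_2/k_1)r_n)=k_1 I_n/\mu_n$, which algebraically rearranges to $r_n=I_{n+1}/I_n$, the prescribed value. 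Uniqueness comes for free: any other solution $J_n$ of $k_1 J_n+k_2 J_{n+1}=\mu_n$ differs from $I_n$ by a term of the form $C(-k_1/k_2)^{n}$, and the corresponding partial-sum generating function $\sum_{k\ge n}C(-k_1/k_2)^{k}z^{k}$ has a simple pole at $z=w$, so the analyticity hypothesis pins $I_n$ down up to this trivial ambiguity.

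The main obstacle is foundational rather than computational. To upgrade the argument to a theorem, one must first fix a definition of C-summation broad enough to subsume the analytic-continuation evaluation above and consistent with the recursive structure of the continued fraction --- meaning that the value assigned to $r_n$ must correctly feed the recursion for $r_{n-1}$. The most natural candidate is: the C-sum at $w$ of $\sum a_k z^k$ is the value at $w$ of its analytic continuation along a distinguished path from the disk of convergence. One then must verify that this agrees with ordinary convergence when it applies, that the continuation path is canonical (or at least unambiguous for the integral families under consideration), and that the induced recursion on the $r_n$ is well-posed. Once these foundational points are settled, the conjecture becomes a direct corollary of the generating-function identity established in the first paragraph.
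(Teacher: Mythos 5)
The paper offers no proof of this statement; it is stated as a conjecture precisely because the notion of being ``suitable for C-Summation'' is never pinned down, so there is no argument of the paper's to compare yours against. Your computation itself is correct and worthwhile: the identity $\sum_{k\ge 0}\mu_{n+k}z^k=(k_1+k_2/z)G(z)-(k_2/z)I_n$ follows from a clean index shift, the prefactor does vanish at $w=-k_2/k_1$ while $-(k_2/w)I_n=k_1I_n$, and your uniqueness remark is the sharpest part: the homogeneous solutions $C(-k_1/k_2)^n$ of the recurrence $k_1J_n+k_2J_{n+1}=\mu_n$ produce a geometric generating function with a pole exactly at $w$, so the analyticity hypothesis eliminates every solution except one. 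This genuinely addresses the non-uniqueness problem raised in Section 8 and shows that, under the hypothesis, the value $k_1I_n/\mu_n$ agrees with the Abel-type value obtained by analytically continuing $\frac{1}{\mu_n}\sum_k\mu_{n+k}z^k$ to $w$.

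What keeps this from being a proof is the point you flag yourself: you \emph{define} the C-sum of the divergent series to be the analytic continuation of its generating function at $w$, and then verify that this continuation equals $k_1I_n$ --- so the conclusion that $I_n$ is ``suitable'' holds essentially by choice of definition. The conjecture only acquires content once suitability is tied to something independent of that choice: for instance, that the continued fraction's convergents (or an even/odd subsequence of them, or a Borel-type resummation of the series) actually recover $r_n=I_{n+1}/I_n$, or that the assignment is consistent with the paper's stability and regularity lemmas in the divergent cases. Note also that in the paper's flagship application, the Euler--Mascheroni fraction, one has $k_1=k_2=1$ and both $\mu_n$ and $I_n$ grow factorially, so $\sum_{k\ge n}I_kz^k$ has radius of convergence zero and cannot be analytic at $-1$ in the ordinary sense; your framework of continuation from a disk of positive radius therefore cannot cover the main example without a Borel-plane reinterpretation of the hypothesis. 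Until one of those bridges is built, what you have is a correct and useful consistency theorem for one particular formalization of C-summation, not a proof of the conjecture as the paper intends it.
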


It is essential that we prove some properties of C-Summation. Typically, some properties desirable in a summation method are regularity, linearity, and stability. It turns out that C-Summation is not linear. However, it is stable and regular. We will begin by proving stability.

\begin{lemma}
    C-Summation is stable. 
\end{lemma}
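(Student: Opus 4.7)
The plan is to verify stability directly from the formula derived in the proof of Theorem 3.2. Recall that a summation method $A$ is stable when $A\bigl(\sum_{k=0}^\infty a_k\bigr) = a_0 + A\bigl(\sum_{k=1}^\infty a_k\bigr)$, so I need to check that peeling off the first term of a C-summable series produces another C-summable series whose assigned value is the natural one.

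First, I would recall the explicit form of the series attached by C-Summation to an integral family $I_n$ satisfying $k_1 I_n + k_2 I_{n+1} = \mu_n$. From Theorem 3.2 the $k$th term is $a_k = (-1)^k (k_2/k_1)^k \mu_k/\mu_0$, so $a_0 = 1$, and the assigned value is $S = k_1 I_0/\mu_0$. Next I would observe that the shifted family $\widetilde I_n := I_{n+1}$ satisfies a recurrence of exactly the same form with $\widetilde\mu_n := \mu_{n+1}$, and therefore defines a C-summable series whose value is $\widetilde S = k_1 I_1/\mu_1$.

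Then I would rewrite the tail $\sum_{k=1}^\infty a_k$ by substituting $k \mapsto k+1$ and factoring out the common leading ratio $-\tfrac{k_2 \mu_1}{k_1 \mu_0}$. What remains term-by-term is exactly the normalized series attached to the shifted family $\widetilde I_n$, so the tail is assigned value $-\tfrac{k_2 \mu_1}{k_1 \mu_0}\cdot \widetilde S = -k_2 I_1/\mu_0$. Finally, using the defining recurrence $\mu_0 = k_1 I_0 + k_2 I_1$, a one-line check gives $S - a_0 = k_1 I_0/\mu_0 - 1 = -k_2 I_1/\mu_0$, matching the value obtained for the tail and proving stability.

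The computation is essentially bookkeeping once the shifted family is introduced; the only subtlety I anticipate is justifying that $\widetilde I_n$ inherits whatever selection criterion (such as the analyticity condition in the preceding Conjecture) was used to single out $I_n$ as the correct representative. Since index-shifting preserves the recurrence and such analytic conditions, this should be a short verification rather than a genuine obstacle.
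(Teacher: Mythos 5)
Your proposal is correct and follows essentially the same route as the paper: both identify the value of the tail by recognizing it as $-\tfrac{k_2}{k_1}$ times the series attached to the shifted family $I_{n+1}$, and both close the argument with the defining recurrence $\mu_n = k_1 I_n + k_2 I_{n+1}$; the only cosmetic difference is that you carry the normalization by $\mu_0$ explicitly while the paper works with the unnormalized sums. Your closing remark about checking that the shifted family inherits the admissibility criterion is a point the paper leaves implicit, but it does not change the argument.
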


\begin{proof}
    Suppose $s=\frac{k_1I_n}{\mu_n}$ is the value assigned to the C-Summable series $$\sum_{k=0}^\infty (-1)^k \bigg(\frac{k_2}{k_1}\bigg)^k \frac{\mu_{n+k}}{\mu_n}$$ Then, $$k_1I_n = \sum_{k=0}^\infty (-1)^k \bigg(\frac{k_2}{k_1}\bigg)^k \mu_{k+n}$$ 

    Similarly, by the same method, also have $$ s' = k_1I_{n+1} =  \sum_{k=0}^\infty (-1)^k \bigg(\frac{k_2}{k_1}\bigg)^k \mu_{n+k+1} $$

    From this we notice that $$\mu_n + \sum_{k=1}^\infty (-1)^k \bigg(\frac{k_2}{k_1}\bigg)^k \mu_{k+n}  = \mu_{n} - \frac{k_2}{k_1}k_1I_{n+1} = k_1I_n$$
\end{proof}

\begin{lemma}
C-Summation is regular when $\big(\frac{k_2}{k_1}\big)^{M+1}I_{n+M+1} \to 0  $
\end{lemma}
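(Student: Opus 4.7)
The plan is to establish regularity by directly computing the classical partial sums of the C-summable series and showing that they converge to the value that C-summation assigns, provided the hypothesis on $(k_2/k_1)^{M+1} I_{n+M+1}$ holds. Since the stability lemma already identifies the $n$-th C-sum as $k_1 I_n/\mu_n$, the task reduces to verifying that the ordinary partial sums of $\sum_{k=0}^\infty (-1)^k (k_2/k_1)^k \mu_{n+k}/\mu_n$ converge to this same quantity in the classical sense.

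First, I would fix $n$ and set $\mu_n S_M := \sum_{k=0}^{M}(-1)^k (k_2/k_1)^k \mu_{n+k}$. Substituting the defining relation $\mu_{n+k} = k_1 I_{n+k} + k_2 I_{n+k+1}$ splits $\mu_n S_M$ into two sums. Reindexing the second one by $j = k+1$, the factor in front becomes $-k_1$ times the same powers of $k_2/k_1$ appearing in the first sum, so the two sums line up term-by-term and everything cancels in telescope fashion except the $k=0$ contribution from the first sum and the $j=M+1$ contribution from the second.

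After simplification, the identity that emerges is
$$\mu_n S_M \;=\; k_1 I_n \;+\; (-1)^M\, k_1 \bigg(\frac{k_2}{k_1}\bigg)^{\!M+1} I_{n+M+1}.$$
Now the hypothesis $(k_2/k_1)^{M+1} I_{n+M+1} \to 0$ kills the remainder term as $M\to\infty$, so $S_M \to k_1 I_n/\mu_n$, which is precisely the value assigned by C-summation in the stability lemma. This is regularity in its standard form: under the stated decay hypothesis, the classical partial sums converge to the C-sum.

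There is no conceptual obstacle here. The relation $k_1 I_n + k_2 I_{n+1} = \mu_n$ is designed precisely to produce the cancellation, so the only care required is bookkeeping signs and the index shift to make the telescoping explicit; once that is done the limit is immediate.
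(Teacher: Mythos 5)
Your proof is correct and follows essentially the same route as the paper's: both rest on substituting $\mu_{n+k}=k_1I_{n+k}+k_2I_{n+k+1}$ and telescoping (the paper phrases this as iterating $k_1I_n=\mu_n-\tfrac{k_2}{k_1}k_1I_{n+1}$ a total of $M$ times) to obtain the partial sum plus a remainder $(-1)^{M}k_1\big(\tfrac{k_2}{k_1}\big)^{M+1}I_{n+M+1}$, which the hypothesis kills. Your version is in fact the cleaner write-up, with the index range and the sign of the remainder stated correctly where the paper's display has minor typos.
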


\begin{proof}
    Suppose $$k_1s=\sum_{k=0}^\infty (-1)^k \bigg(\frac{k_2}{k_1}\bigg)^k {\mu_{n+k}}$$ is a convergent sequence. Then $\mu_k = k_1I_n + k_2I_{n+1}$, where $I_n$ is admissible by theorem 8.1 with $p=0$. Then we also have $k_1I_n = \mu_k - \frac{k_2}{k_1} k_1I_{n+1}$ Using this identity $M$ times, we get that $$k_1I_n = \bigg(\sum_{k=1}^M (-1)^k\bigg(\frac{k_2}{k_1}\bigg)^k \mu_{n+k}\bigg) + (-1)^k\bigg(\frac{k_2}{k_1}\bigg)^{M+1}k_1I_{n+M+1}$$ Then, since $\big(\frac{k_2}{k_1}\big)^{M+1}I_{n+M+1} \to 0 $.

\end{proof}

\section{Conclusion}
In this paper, we introduce a general method to evaluate Eulerian continued fractions from families of integrals. This method allows us to not only find the value of the fraction, but also allows us to trivially derive an array of identities from it. The most important application of the technique is evident in the case of the Euler-Mascheroni constant. We see that the continued fraction produced by this method diverges to the fixed point $-1$. However, by then using Euler's continued fraction theorem regardless, we can assign the value $\gamma$ to a divergent sum. This shows that the technique has the possibility to provide a natural way to assign values to divergent sums, when the continued fraction generated does not converge as we expect. Furthermore, the sum assigned to convergent series agrees with the value of the sum. In future work, we may explore this result more, and determine the class of functions where this summation method applies.

\section{Acknowledgments}
The author would like to thank Professor Ovidiu Costin for his encouragement, comments, and suggestions.

\bibliographystyle{alpha}
\bibliography{2025contfracmethod}
\cite{euler2012introduction}
\cite{compton1973integral}
\cite{whittaker2020course}
\cite{pilehrood2013continuedfractionexpansioneulers}
\cite{cuyt2008handbook}
\cite{c39be6f6-2bdf-30c1-bebf-7068a12748e0}
\cite{cvijovic1997continued}
\end{document}